\newtheorem*{theorem*}{Theorem}
\newtheorem{proposition}{Proposition}
\newtheorem{corollary}{Corollary}
\theoremstyle{remark}
\theoremstyle{definition}
\newcommand{\I}{\mathcal{I}}
\newcommand{\B}{\mathcal{B}}
\newcommand{\C}{\mathcal{C}}
\title[Cramer-Castillon on a Triangle's Incircle and Excircles]{Cramer-Castillon on a Triangle's\\Incircle and Excircles}
\author{Dominique Laurain}
\address{D. Laurain, Enseeiht,
Toulouse, France, \texttt{dominique.laurain31@orange.fr}}
\author{Peter Moses}
\address{P. Moses, Moparmatic Inc., Worcestershire, England, \texttt{moparmatic@gmail.com}}
\author{Dan Reznik$^*$} 
\address{D. Reznik, Data Science Consulting Ltd., Rio de Janeiro, Brazil. \texttt{dreznik@gmail.com}}
\begin{document}

\maketitle

\begin{abstract}
The Cramer-Castillon problem (CCP) consists in finding one or more polygons inscribed in a circle such that their sides pass cyclically through a list of $N$ points. We study this problem where the points are the vertices of a triangle and the circle is either the incircle or one of the excircles. We find that (i) in each case there is always a pair of solutions (total of 8 new triangles and 24 vertices); (ii) each pair shares all Brocard geometry objects, (iii) barycentric coordinates are laden with the golden ratio; and (iv) simple operations on the barycentrics of a single vertex out of the 24 yield all other 23. 

\vskip .3cm
\noindent\textbf{Keywords} Golden ratio, triangle, Brocard, symmedian.
\vskip .3cm
\noindent \textbf{MSC} {51M04
\and 51N20 \and 51N35}
\end{abstract}

\section{Introduction}
\label{sec:intro}
The Cramer-Castillon problem (CCP) consists in finding one or more $N$-gons inscribed in a circle $\C$ such that their sides pass cyclically through a set of points $P_i$, $i=1 \cdots N$. In \cref{fig:cc} this is illustrated for the $N=3$ case. The solutions to CCP are given\footnote{In the hyperbolic plane, corresponding sides of the two solutions are polar-orthogonal with respect to the ideal circle \cite{arnold2018-cross-ratio}.} by the roots of a quadratic equation (see
\cite[Section 6.9]{ostermann2012} \cite{wanner2006}), i.e., there can be 0, 1, or 2 real solutions. Geometric conditions for solution existence, though not germane to this article, are described in the aforementioned references.

\begin{figure}
    \centering
    \includegraphics[width=\textwidth]{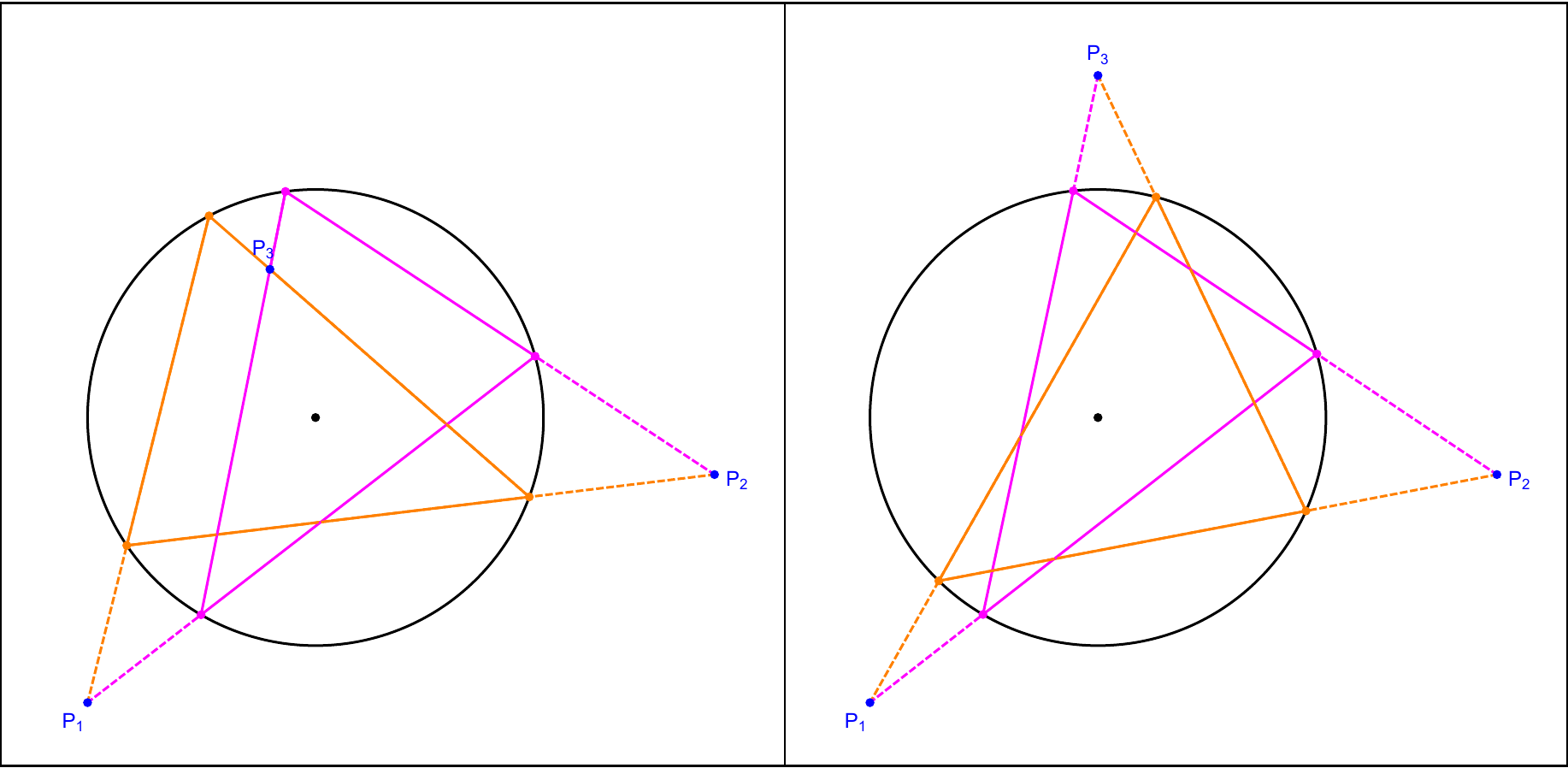}
    \caption{The Cramer-Castillon problem (CCP) in the $N=3$ case. In the left (resp. right) picture, two points are exterior and one is interior (resp. all exterior) to the target circle. In each case, two solutions are shown (magenta and orange).}
    \label{fig:cc}
\end{figure}

Referring to \cref{fig:cc-inc}, we investigate the CCP for the case where the $P_i$ are the vertices $A,B,C$ of a reference triangle and $\C$ is the incircle or one of the excircles. Our findings include:

\begin{itemize}
    \item For any triangle, the CCP on either the incircle or an excircle has exactly two solutions (total of 8 new triangles and 24 new vertices).
    \item We derive barycentric coordinates for the 4 pairs of solutions and notice they are laden with the golden ratio $\phi=(\sqrt{5}+1)/2$.
    \item Each pair shares circumcenter, symmedian point, and  all ``Brocard geometry'' objects \cite{johnson1960}, e.g., the Brocard points, Brocard circle and inellipse, Lemoine and Brocard axis, isodynamic points, etc. 
    \item The four distinct Brocard axes shared by each pair concur on the de Longchamps  point \cite{mw} of the reference triangle.
    \item Given barycentrics for a single vertex out of the 24 newly generated, all other 23 can be obtained with simple cyclic substitutions.
    \item Solving the CCP for a triangle's arbitrary inconic is equivalent to solving it (via a projectivity) for the incircle case.
\end{itemize}

\subsection*{Article organization}

The Cramer-Castillon Problem (CCP) on the incircle is covered in \cref{sec:incircle}. Its shared Brocard objects are examined in \cref{sec:brocard}. The CCP on the excircles are analyzed in \cref{sec:excircles}. In \cref{app:cc-correspond} we provide a list of correspondences between triangle centers in the incircle CCP solutions and the reference triangle.

\subsection*{Notation}

We shall use barycentric coordinates \cite{mw} and refer to triangle centers using Kimberling's notation $X_k$ \cite{etc}.

\section{Cramer-Castillon on the Incircle}
\label{sec:incircle}
Referring to \cref{fig:cc-inc}, consider a triangle $T=ABC$ with the sidelengths $a,b,c$ and $s=(a+b+c)/2$ its semiperimeter. Below we use $\phi$ to denote the golden ratio, $\phi  = (1 + \sqrt{5})/2$.

\begin{figure}
\centering
\includegraphics[width=\textwidth]{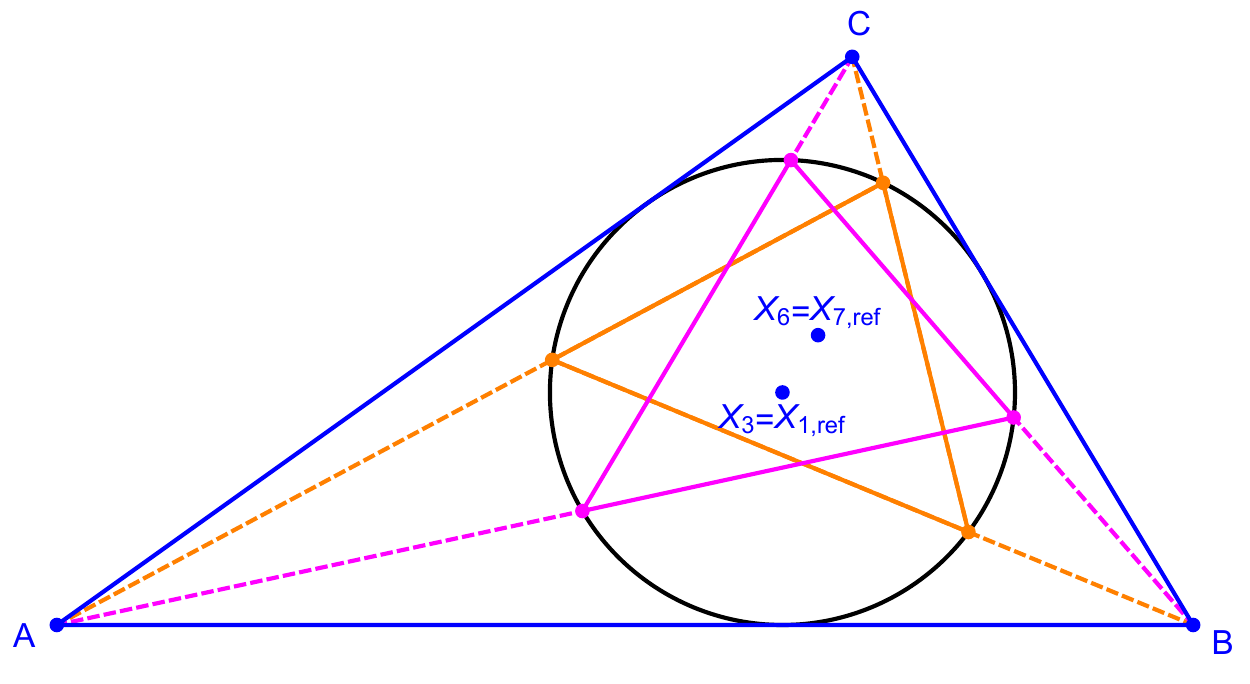}
\caption{The two solutions of CCP (orange, magenta) on the incircle $\C$ of a triangle $T=ABC$. Since both are inscribed in $\C$, they share their circumcenter $X_3$ (at the incenter $X_{1,\text{ref}}$ of the reference). Also shared is the symmedian point $X_6$, which coincides with the Gergonne $X_{7,\text{ref}}$ of the reference.}
\label{fig:cc-inc}
\end{figure}

\begin{proposition}
The CCP on a triangle $T$ and its incircle $\C$ admits exactly two solutions $T_1$ and $T_2$, whose barycentric vertex matrices with respect to $T$ are given by:
\begin{align*}
T_1&=
\begingroup 
\setlength\arraycolsep{10pt}
\begin{bmatrix}
(1 - \phi)^2  v  w & u w &  (2 - \phi)^2 u  v  \\
(2 - \phi)^2  v  w & (1 - \phi)^2  u  w &  u  v \\
 v  w & (2 - \phi)^2  u  w & (1 - \phi)^2  u  v\\
\end{bmatrix}\endgroup\\
T_2&=
\begingroup 
\setlength\arraycolsep{10pt}\begin{bmatrix}
(1 - \phi)^2  v  w & (2 - \phi)^2  u  w &  u  v \\
 v  w & (1 - \phi)^2  u  w & (2 - \phi)^2  u  v \\
(2 - \phi)^2  v  w &  u  w & (1 - \phi)^2  u  v \\
\end{bmatrix}\endgroup
\end{align*}
where $u = (s-a)$, 
$v = (s-b)$, 
$w = (s- c)$.
\label{prop:two-sols}
\end{proposition}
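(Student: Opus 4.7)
The proposition has two claims: that there are exactly two solutions, and that they are given by the matrices $T_1, T_2$. For the upper bound I invoke the general CCP result recalled in the introduction: the solutions are roots of a quadratic, hence at most two. For existence I verify that $T_1$ and $T_2$ each satisfy the two defining conditions --- vertices on the incircle $\C$, and sides through $A, B, C$ in the prescribed cyclic order --- and that $T_1 \neq T_2$.

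\textbf{Vertex-passage check.} This is largely encoded in the matrix structure. In both $T_1$ and $T_2$, rows $1$ and $2$ share their second and third coordinates, so the line through them has the form $\alpha y + \beta z = 0$ and automatically contains $A = (1{:}0{:}0)$; rows $2$ and $3$ similarly share their first and third coordinates, placing $B$ on their line. Only the wrap-around side from row $3$ to row $1$ requires an algebraic identity: this line contains $C = (0{:}0{:}1)$ iff $x_1 y_3 = x_3 y_1$. For $T_1$ this reduces to $\phi^2(2\phi-3)^2 = (\phi-2)^2$ (equivalent to $\phi(2\phi-3) = 2 - \phi$), and for $T_2$ to $\phi(2\phi+1) = 3\phi+2$; both are consequences of $\phi^2 = \phi + 1$.

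\textbf{Incircle check.} The incircle, being the unique conic tangent to $BC, CA, AB$ at the contact points $(0{:}w{:}v), (w{:}0{:}u), (v{:}u{:}0)$, has barycentric equation
\[
u^2 x^2 + v^2 y^2 + w^2 z^2 - 2uv\, xy - 2uw\, xz - 2vw\, yz = 0.
\]
Substituting each of the six rows of $T_1, T_2$ and clearing the common factor $(uvw)^2$ yields a polynomial identity in $\phi$ that, modulo $\phi^2 - \phi - 1$, simplifies to $0 = 0$. Distinctness $T_1 \neq T_2$ is immediate from the non-proportionality of their first rows.

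\textbf{Main obstacle.} The verifications above are mechanical once the matrices are known; the real content of the proposition lies in the discovery of the entries, i.e.\ in explaining why $\phi$ appears. Conceptually, if $\tau_V$ denotes the involution on $\C$ obtained from second intersections of lines through the reference vertex $V$ (its fixed points being the two contact points adjacent to $V$), then CCP solutions correspond to fixed points of the Möbius transformation $\tau_C \tau_B \tau_A$ on $\C \cong \mathbb{P}^1$. Parametrizing $\C$ explicitly --- say by half-angle tangent at the incenter, or by cross-ratio with the contact triangle --- turns the closure condition into a quadratic whose discriminant comes out proportional to $5$, producing $(1\pm\sqrt{5})/2 = \phi, -1/\phi$ as the solution parameters. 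A derivation along these lines would be more illuminating than verification, but is not required for the statement as phrased.
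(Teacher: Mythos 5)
Your proof is correct, and it runs in the opposite direction to the paper's. The paper argues constructively, via the classical solution of Castillon's problem: it starts three chord-paths on $\C$ at intouch points (plus one reflected intouch point), notes that these paths fail to close, intersects the cross-joins $(A_1B_4, A_4B_1)$ and $(A_1C_4, A_4C_1)$ to obtain two points of the axis of the induced homography of $\C$, and recovers the two solution triangles by intersecting that axis with $\C$. That construction explains where the solutions come from, but as written it never connects to the displayed matrices: the paper nowhere checks that the stated coordinates lie on $\C$ or that the sides pass through $A$, $B$, $C$. Your verification supplies exactly that missing content, and your computations check out: the shared-coordinate structure disposes of two sides of each triangle, the wrap-around conditions reduce to the identities $\phi(2\phi-3)=2-\phi$ and $\phi(2\phi+1)=3\phi+2$, and the incircle substitution vanishes for every row $(p^2/u : q^2/v : r^2/w)$ because the inconic form factors as $-(p+q+r)(-p+q+r)(p-q+r)(p+q-r)$ and in each row some $\pm p\pm q\pm r$ is zero (e.g.\ $1+(\phi-1)=\phi$ for the first row of $T_1$). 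Your closing ``main obstacle'' paragraph --- fixed points of the M\"obius map $\tau_C\tau_B\tau_A$ on $\C$ --- is essentially the analytic form of the paper's construction, so the two proofs are complementary: theirs gives provenance, yours gives the actual verification of the stated formulas.

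One soft spot: your upper bound ``roots of a quadratic, hence at most two'' silently excludes the porismatic case in which the quadratic degenerates identically and \emph{every} point of $\C$ starts a closed triangle; exhibiting two valid solutions does not by itself rule this out. The paper handles this implicitly through its non-closing-paths observation. You can close the gap in one line inside your own framework: $\tau_C\tau_B\tau_A$ is the identity only if the involutions satisfy $\tau_B\tau_A=\tau_C$, which forces $ABC$ to be self-polar with respect to $\C$; this never happens for the incircle, since the polar of $A$ is the chord joining the contact points on $AB$ and $AC$, which contains neither $B$ nor $C$. With that remark added, your argument is a complete proof of the proposition as stated --- indeed more rigorous, as regards the explicit matrices, than the paper's own sketch.
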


\begin{proof}
While barycentric entries in $T_1,T_2$ can be obtained as roots of a quadratic equation \cite[Section 6.9]{ostermann2012}, we also provide a synthetic construction for the vertices.

Referring to \cref{fig:two-sols}, define three paths ($A_1$,$A_2$,$A_3$,$A_4$), ($B_1$,$B_2$,$B_3$,$B_4$) and ($C_1$,$C_2$,$C_3$,$C_4$). Cramer-Castillon requires that $A_iA_{i+1}$ (or $B_iB_{i+1}$ or $C_iC_{i+1}$) be a circle chord.

\begin{figure}
\centering
\includegraphics[width=\linewidth]{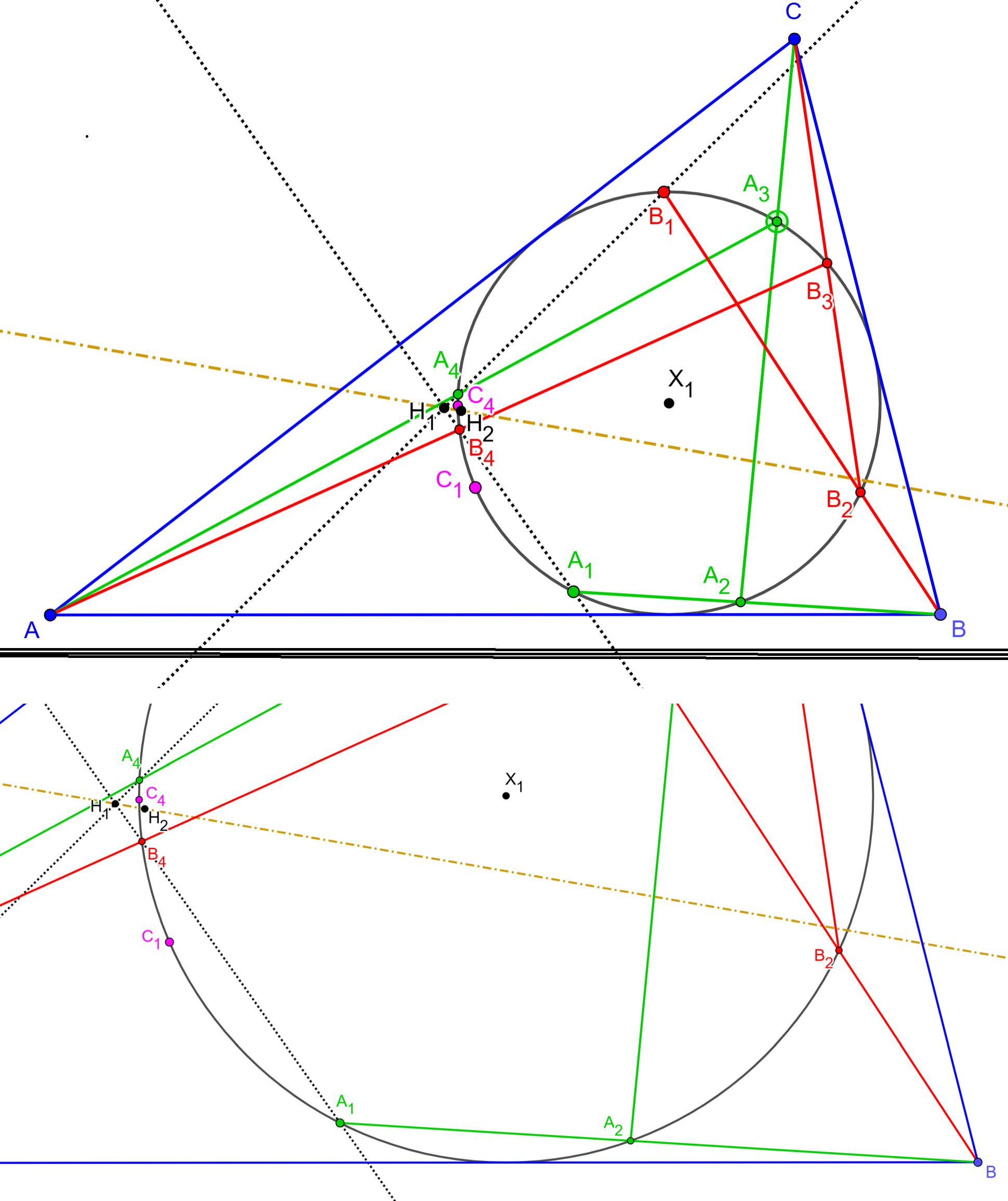}
\caption{\textbf{top}: The construction for the two points $H_1$ and $H_2$ which define the homography axis used in \cref{prop:two-sols}. The sequence $C_1, ... C_4$ is not shown, but is as $A_1 ... A_4$ and $B_1 ... B_4$. \textbf{bottom}: a slight zoom-in on the region where $H_1$ and $H_2$ are.}
\end{figure}

The points $B_1$, $C_1$ are tangent points of $\C$ with $AC$ and $AB$ while $A_1$ is the reflection of a tangent point with $BC$ with respect to $X_1$.

The three paths are non-closing: $A_4$, $B_4$ and  $C_4$ are respectively different from $A_1$, $B_1$ and $C_1$.

Intersect the two pairs of segments $(A_1B_4$,$A_4B_1)$ and $(A_1C_4$,$A_4C_1)$ to get points $H_1$ and $H_2$ on their perspectrix.

Intersecting the homography line with $\C$ gives $M_1$ and $M_4$, two vertices of the solution triangles $M_1M_2M_3$ and $M_4M_5M_6$.
\end{proof}


By definition, both solutions share their circumcenter $X_3$, located at the incenter $X_1$ of the reference. Interestingly:

\begin{proposition}
The two solutions $T_1$ and $T_2$ have a common symmedian point $X_6$ which coincides with the Gergonne point $X_7$ of $T$.
\label{prop:x6}
\end{proposition}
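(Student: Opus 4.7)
The plan is a direct barycentric calculation from the matrices of \cref{prop:two-sols}. Recall that for a triangle with vertices $V_1, V_2, V_3$ and opposite sidelengths $a_1, b_1, c_1$, the symmedian point has barycentrics $(a_1^2 : b_1^2 : c_1^2)$ in its own frame; viewed in the reference frame of $T$, this becomes
\begin{equation*}
X_6'(T_1) \;\propto\; a_1^2\, \widehat{V}_1 + b_1^2\, \widehat{V}_2 + c_1^2\, \widehat{V}_3,
\end{equation*}
where $\widehat{V}_i$ denotes the $i$-th row of the $T_1$-matrix normalized so its coordinates sum to $1$. The task thus reduces to evaluating this weighted combination and verifying it is proportional to $(1/u : 1/v : 1/w) = X_7$.

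I would carry out the following steps. First, normalize each row of the $T_1$ matrix, yielding three denominators $\sigma_1, \sigma_2, \sigma_3$ that are rational in $u, v, w, \phi$. Second, apply the standard barycentric distance formula
\begin{equation*}
|PQ|^2 = -a^2(\Delta y)(\Delta z) - b^2(\Delta z)(\Delta x) - c^2(\Delta x)(\Delta y),
\end{equation*}
with $(\Delta x, \Delta y, \Delta z) = \widehat{V}_i - \widehat{V}_j$ and the substitutions $a = v+w$, $b = u+w$, $c = u+v$, to obtain $a_1^2, b_1^2, c_1^2$ as polynomials in $u, v, w, \phi$. Third, assemble the weighted sum, clear the common denominator $\sigma_1 \sigma_2 \sigma_3$, and reduce all powers of $\phi$ via $\phi^2 = \phi + 1$. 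Fourth, verify that the resulting coordinate vector is a scalar multiple of $(vw : uw : uv)$. The computation for $T_2$ is structurally identical: its vertex matrix has the same column-repetition pattern, differing only in the specific $\phi$-polynomial entries.

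A structural observation that makes the cancellations transparent is this: the three rows of the $T_1$ matrix share the same third entry $(\phi - 1)^2/w$, and share a common first (resp.\ second) entry pairwise, so after forming the weighted sum the third coordinate of $X_6'(T_1)$ is immediately a scalar multiple of $1/w$, and analogous collapses drive the other two coordinates into the $1/u$ and $1/v$ forms required by $X_7$. The main obstacle will be symbolic bulk in the distance step: the three squared sidelengths are substantial expressions in $u, v, w, \phi$, and I would verify the final identity in a computer algebra system, treating $\phi$ as a formal symbol satisfying $\phi^2 - \phi - 1 = 0$, rather than by hand. The result is morally forced, however: both solution triangles are inscribed in the incircle and cut it along chords whose endpoint data are determined entirely by the contact lengths $u, v, w$, so their common symmedian point must be a symmetric function of $u, v, w$ of the simplest possible form, and the Gergonne point $(1/u : 1/v : 1/w)$ is precisely that point.
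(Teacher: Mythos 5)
Your proposal is correct and is essentially the paper's proof: the authors also verify the claim by a direct CAS computation of the solution triangles' symmedian points in reference barycentrics, finding $[1/(s-a):1/(s-b):1/(s-c)]$, i.e., the Gergonne point. Your write-up merely makes explicit the computational pipeline (row normalization, barycentric distance formula with $a=v+w$, $b=u+w$, $c=u+v$, reduction via $\phi^2=\phi+1$) that the paper's one-line proof leaves implicit.
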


\begin{proof}
Using a CAS, we obtain $[1/(s-a) : 1/(s-b) : 1/(s-c)]$ as barycentrics for the symmedian point of $T_1,T_2$ with respect to $T$, which are precisely those of the Gergonne $X_7$ of $T$ \cite{etc}.
\end{proof}

\section{Shared Brocard objects}
\label{sec:brocard}
Recall (i) the Brocard circle of a triangle has segment $X_3 X_6$ as diameter, and (ii) the Brocard axis is the line that passes through said diameter \cite{mw}.

Since by definition $T_1,T_2$ share their circumcenter $X_3$, and per \cref{prop:x6} their symmedian as well:

\begin{corollary}
$T_1$ and $T_2$ share their Brocard axis and Brocard circles.
\end{corollary}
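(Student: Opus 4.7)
The plan is to observe that this corollary is essentially immediate once the ingredients are assembled, so the proof reduces to unpacking definitions and invoking what we already have. First I would recall the two facts stated just above the corollary: the Brocard circle of a triangle is the circle having segment $X_3X_6$ as a diameter, and the Brocard axis is the line supporting that diameter. Both objects are therefore completely determined by the unordered pair of points $\{X_3, X_6\}$ of the triangle.

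Next I would gather the two inputs we need about $T_1$ and $T_2$. Since both solution triangles are by construction inscribed in the incircle $\mathcal{C}$ of $T$, their circumscribed circle is $\mathcal{C}$ itself, so their circumcenters coincide with the center $X_1$ of $\mathcal{C}$; in particular $X_3(T_1)=X_3(T_2)$. By \cref{prop:x6}, the symmedian points of $T_1$ and $T_2$ also coincide (with the Gergonne point $X_7$ of $T$), so $X_6(T_1)=X_6(T_2)$.

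Combining the two, the pair $\{X_3, X_6\}$ is the same for $T_1$ and $T_2$. Because the Brocard axis and the Brocard circle depend only on this pair, it follows that $T_1$ and $T_2$ share both the Brocard axis and the Brocard circle. There is no real obstacle here: the work has already been done in \cref{prop:two-sols} (circumscription in $\mathcal{C}$) and \cref{prop:x6} (common symmedian point); the corollary just packages these facts through the standard definitions. The only thing I would be careful about is explicitly verifying that $X_3 \ne X_6$ for the solution triangles, so that the Brocard axis is a well-defined line rather than a degenerate point — this is automatic as long as $T$ is scalene, and can be noted in a one-sentence aside.
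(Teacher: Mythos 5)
Your proof is correct and matches the paper's own reasoning exactly: the paper derives this corollary directly from the facts that $T_1,T_2$ share their circumcenter (both being inscribed in the incircle) and their symmedian point (\cref{prop:x6}), combined with the definitions of the Brocard circle and axis as determined by the pair $\{X_3,X_6\}$. Your added remark on verifying $X_3\neq X_6$ (equivalently $X_1\neq X_7$ of the reference, which holds for any non-equilateral $T$) is a harmless refinement the paper omits.
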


Let $\delta=|X_6-X_3|$. The Brocard angle $\omega$ to a triangle is given by \cite[Prop. 3, p. 209]{casey1888}:
\[ \tan\omega = \frac{\sqrt{3}}{3}\sqrt{1-\left(\frac{\delta}{R}\right)^2} \]

\begin{corollary}
$T_1$ and $T_2$ have the same Brocard angle $\omega$.
\end{corollary}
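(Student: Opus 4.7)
The plan is to observe that every ingredient in Casey's formula for $\tan\omega$ is already known to coincide for $T_1$ and $T_2$, so the equality of Brocard angles drops out immediately.

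First I would note that by construction both solution triangles $T_1$ and $T_2$ are inscribed in the incircle $\C$ of the reference triangle $T$. Hence $\C$ serves as their common circumcircle, and in particular they share the same circumradius: $R(T_1) = R(T_2) = r$, where $r$ is the inradius of $T$. This takes care of the denominator in the expression $(\delta/R)^2$.

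Next, the preceding corollary (or equivalently, the combination of the shared circumcenter $X_3'$ coming from the common circumcircle $\C$ and Proposition~\ref{prop:x6} giving the shared symmedian point $X_6'$) tells us that the quantity $\delta = |X_6' - X_3'|$ is the same number whether computed from $T_1$ or from $T_2$. So the numerator inside the square root agrees as well.

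Plugging both coincidences into Casey's identity
\[
\tan\omega \;=\; \frac{\sqrt{3}}{3}\sqrt{1-\left(\frac{\delta}{R}\right)^2},
\]
we conclude that $\tan\omega(T_1) = \tan\omega(T_2)$, and since the Brocard angle lies in $(0,\pi/3]$ where $\tan$ is injective, the Brocard angles themselves coincide. There is no genuine obstacle here; the result is a direct corollary of the shared circumcircle (free from the CCP setup) plus the shared symmedian point already established in Proposition~\ref{prop:x6}. The only thing worth stating explicitly is that ``inscribed in $\C$'' forces the common circumradius, which is the piece that goes beyond the immediately preceding corollary.
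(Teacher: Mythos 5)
Your proof is correct and follows exactly the route the paper intends: the corollary is stated immediately after Casey's formula precisely because the shared circumcircle (hence equal $R$) and the shared circumcenter-plus-symmedian (hence equal $\delta = |X_6'-X_3'|$) make the equality of $\tan\omega$ immediate. One trivial quibble: the Brocard angle is actually bounded by $\pi/6$, not $\pi/3$, but injectivity of $\tan$ on the relevant interval holds either way, so nothing in your argument is affected.
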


Referring to \cref{fig:cc-broc}, recall the two Brocard points of a triangle lie on the Brocard circle and the line joining them is perpendicular to the Brocard axis $X_3 X_6$. In  \cite{shail1996-brocard} the following formula is given for the distance between the two Brocard points $\Omega_1,\Omega_2$ which only depends on the circumradius $R$ and the Brocard angle
$\omega$:
\[|\Omega_1-\Omega_2|^2=4c^2=4R^2\sin^2\omega (1-4\sin^2\omega)\]

\begin{corollary}
$T_1$ and $T_2$ share their Brocard points $\Omega_1$ and $\Omega_2$.
\end{corollary}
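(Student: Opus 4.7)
The plan is to combine the three preceding corollaries (shared Brocard axis, Brocard circle, and Brocard angle) with the Shail formula just quoted. Both Brocard points of each $T_i$ lie on the common Brocard circle, and the chord $\Omega_1\Omega_2$ is by construction perpendicular to the common Brocard axis $X_3 X_6$. The Shail formula says that the length of this chord depends only on $R$ and $\omega$, both of which are shared, so the chord $\Omega_1(T_1)\Omega_2(T_1)$ is congruent to $\Omega_1(T_2)\Omega_2(T_2)$.

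A chord of a fixed circle of prescribed perpendicular direction and prescribed length is determined up to reflection across the center of that circle, so two cases remain: either the two chords coincide as point sets, or they sit on opposite sides of the Brocard circle center $X_{182}=(X_3+X_6)/2$ along the axis. To rule out the latter I would verify that the Brocard midpoint $X_{39}$, which is the midpoint of each $\Omega_1\Omega_2$, coincides for $T_1$ and $T_2$. The cleanest route mirrors the proof of \cref{prop:x6}: substitute the vertex matrices from \cref{prop:two-sols} into the barycentric formula $X_{39}=[a^2(b^2+c^2):b^2(c^2+a^2):c^2(a^2+b^2)]$ computed for each of $T_1$ and $T_2$, convert the results back to coordinates with respect to $T$ using a CAS, and check that both evaluations give the same triple (which should itself turn out to be a named center of $T$).

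The main obstacle is precisely this last verification. Shail's formula leaves a genuine two-fold ambiguity, and collapsing it really does require input beyond the three shared objects already established. A more synthetic alternative would be to observe that $T_1$ and $T_2$ are both inscribed in $\C$ with the same orientation, so their Brocard midpoints must lie on the same side of $X_{182}$ along the axis; combined with the shared chord length this forces equality, but turning the orientation/continuity argument into a clean proof still seems less economical than one line of computer algebra.
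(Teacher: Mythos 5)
Your first paragraph is precisely the paper's own justification: in the paper the corollary appears immediately after Shail's formula with no further argument, the intended reasoning being exactly that the Brocard points lie on the shared Brocard circle, on a chord perpendicular to the shared Brocard axis, whose length is determined by the shared $R$ and $\omega$. So your core approach matches the paper's. What you add, to your credit, is the observation that this reasoning by itself leaves a genuine two-fold ambiguity: a chord of prescribed length perpendicular to a fixed diameter can sit on either side of the center $X_{182}$, so the shared data determines the pair of Brocard points only up to reflection. The paper glosses over this. Your proposed patch (a CAS verification that the Brocard midpoint $X_{39}$, computed from the vertex matrices of \cref{prop:two-sols}, agrees for $T_1$ and $T_2$) is sound, and is in effect what the paper supplies in the proposition immediately following the corollary, which exhibits $\Omega_1$ and $\Omega_2$ of both solutions as explicit triangle centers of $T$; that computation subsumes the corollary entirely. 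If you want to avoid the CAS, there is a cleaner synthetic resolution than your orientation/continuity sketch: the classical formula $|X_3\,\Omega_i| = R\sqrt{1-4\sin^2\omega}$ for the distance from the circumcenter to either Brocard point depends only on shared quantities, so the pair $\{\Omega_1,\Omega_2\}$ is pinned down as the intersection of the shared Brocard circle with the circle of that radius centered at the shared $X_3$; the two candidate chords in your ambiguity are not equidistant from $X_3$ unless they pass through $X_{182}$, in which case they coincide anyway. Note finally that any such argument identifies the Brocard points only as an unordered pair; matching the labels $\Omega_1,\Omega_2$ across $T_1,T_2$ does require the same-orientation remark you make at the end.
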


\begin{proposition}
The shared Brocard points $\Omega_1$ and $\Omega_2$ of $T_1$ and $T_2$ are triangle centers of $T$ given by the following barycentric coordinates:
\[
\Omega_1 = [\alpha/u : \beta/v : \gamma/w ],\;\;\;
\Omega_2 = [\gamma/u : \alpha/v :\beta/w ]
\]
where:
\begin{align*}
\alpha &=(a - b)^2 - (a + b)c\\
\beta &=(b - c)^2 - (b + c)a\\
\gamma &=(c - a)^2 - (c + a)b
\end{align*}
and $u,v,w$ are as in \cref{prop:two-sols}.
\end{proposition}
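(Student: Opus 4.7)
The plan is to compute the Brocard points of $T_1$ directly from the data of Proposition 1 and then rewrite the result in $T$-barycentrics. Corollary 3 already forces the Brocard points of $T_1$ and $T_2$ to coincide, so it suffices to work with $T_1$ alone; a parallel computation on $T_2$ will serve as a welcome sanity check.

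First I would take the three rows of the $T_1$ vertex matrix in Proposition 1 as $T$-barycentrics of the vertices of $T_1$, pass to Cartesian coordinates with $T$ in a standard placement, and compute the sidelengths $a_1,b_1,c_1$ of $T_1$ via the distance formula. The classical barycentric formula for the first Brocard point of a triangle with sidelengths $(a_1,b_1,c_1)$ is $[1/b_1^{2}:1/c_1^{2}:1/a_1^{2}]$ in the triangle's own frame, and the second Brocard point is obtained from this by a cyclic shift. These triples determine $\Omega_1,\Omega_2$ as affine combinations of the vertices of $T_1$.

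Second, I would translate those $T_1$-barycentrics back into $T$-barycentrics by multiplying through by the $T_1$ vertex matrix. At this stage a CAS is effectively mandatory: the relation $\phi^{2}=\phi+1$ collapses all golden-ratio terms, and after clearing into common denominators $s-a,s-b,s-c$ and factoring, the numerators should be recognisable as the quantities $\alpha,\beta,\gamma$ of the statement. The cyclic structure of the resulting coordinates in $(a,b,c)$ then certifies $\Omega_1$ and $\Omega_2$ as triangle centers of $T$ in the stated sense, and fixes which of the two sits in which cyclic slot.

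The principal obstacle is computational bookkeeping: the change-of-basis step yields sizeable rational expressions in $a,b,c,\phi$, and extracting the compact form requires both $\phi$'s minimal polynomial and systematic factoring through $u=s-a$, $v=s-b$, $w=s-c$. A stringent cross-check is to rerun the calculation on $T_2$, whose vertex matrix is obtained from that of $T_1$ by swapping $\phi$ with its Galois conjugate $1-\phi$; by Corollary 3 this must reproduce the very same pair of points, a consistency that is non-trivial at the level of raw expressions and therefore a reliable test of the symbolic simplification.
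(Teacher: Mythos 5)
Your plan is sound and matches what the paper effectively does: the paper states this proposition with no written proof at all, relying (as in its Proposition~2 on the symmedian point, whose proof reads ``Using a CAS, we obtain\dots'') on exactly the kind of symbolic computation you describe --- sidelengths of $T_1$ from the vertex matrix of Proposition~1, the classical Brocard-point barycentrics $[1/b_1^2:1/c_1^2:1/a_1^2]$ in the frame of $T_1$, change of basis back to $T$, and simplification via $\phi^2=\phi+1$. One caution on the change-of-basis step: ``multiplying through by the $T_1$ vertex matrix'' is only valid after each row of that matrix has been normalized to unit coordinate sum (your ``affine combination of the vertices'' phrasing is the correct formulation; the raw homogeneous rows of Proposition~1 would give a wrong projective point), and with that detail handled the computation, together with your $T_2$ cross-check, fully establishes the stated coordinates.
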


Let $a,b$ be the semi-axes of the Brocard inellipse of a triangle (whose foci are the Brocard points). The following relation was derived in \cite[Lemma 2]{reznik2022-matryoshka}:
\[ [a,b]= R\left[\sin\omega,2\sin^2\omega\right] \]

\begin{corollary}
$T_1,T_2$ share their Brocard inellipses.
\label{cor:broc}
\end{corollary}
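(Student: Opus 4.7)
The plan is to invoke the uniqueness of a conic determined by its foci and one of its semi-axes, and then assemble the three ingredients that the preceding corollaries have already supplied. Concretely, the Brocard inellipse of any triangle is an ellipse whose two foci are the Brocard points $\Omega_1,\Omega_2$; hence it is completely determined, as a planar conic, by the pair $(\Omega_1,\Omega_2)$ together with either semi-axis $a$ or $b$ (the other is forced by $b^2=a^2-|\Omega_1-\Omega_2|^2/4$).

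First I would record that, by the previous corollary, $T_1$ and $T_2$ share both Brocard points $\Omega_1$ and $\Omega_2$, so their Brocard inellipses have the same focal pair (and therefore the same center, the same major-axis line, and the same focal distance). Next, since $T_1$ and $T_2$ are by construction both inscribed in the incircle $\C$ of $T$, their common circumradius equals the inradius of $T$; call it $R$. Thus $R$ is shared by both solutions. Combined with the earlier corollary asserting that $T_1$ and $T_2$ have a common Brocard angle $\omega$, the displayed formula
\[ [a,b]=R\bigl[\sin\omega,\,2\sin^2\omega\bigr] \]
quoted from \cite{reznik2021-matryoshka} immediately yields that $T_1$ and $T_2$ have identical Brocard inellipse semi-axes.

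Putting these pieces together: the two Brocard inellipses share foci and share a semi-axis, so they coincide as conics. This is the entire argument, and there is no real obstacle beyond confirming that the uniqueness statement applies, i.e., that one is not inadvertently comparing an ellipse with a degenerate conic or with a hyperbola sharing the same data — but since the Brocard inellipse is always a proper ellipse inscribed in its triangle, this concern does not arise. The proof therefore reduces to a one-line assembly of \Cref{prop:x6} (via its corollaries on shared circumradius, Brocard angle, and Brocard points) together with the cited semi-axis formula.
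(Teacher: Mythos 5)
Your argument is correct and is precisely the one the paper intends: the corollary is stated without explicit proof because it follows immediately from the shared circumcircle (hence shared $R$), the shared Brocard angle $\omega$, the shared Brocard points (the foci), and the quoted semi-axis formula $[a,b]=R[\sin\omega,2\sin^2\omega]$. Your only addition is to make explicit the (true and harmless) uniqueness fact that an ellipse is determined by its foci and one semi-axis, so your write-up matches the paper's reasoning in substance and structure.
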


Recall the two isodynamic points $X_{15}$ and $X_{16}$ are the limiting points of the {\em Schoute pencil}  \cite{johnson17-schoutte}, defined by the circumcircle and the Brocard circle (and orthogonal to the Apollonius circles), whose radical axis is the Lemoine axis.

\begin{corollary}
$T_1$ and $T_2$ share their isodynamic points $X_{15}$ and $X_{16}$ and Lemoine axis.
\end{corollary}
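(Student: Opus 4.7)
The plan is to reduce this corollary directly to the two ingredients that have already been recorded: that $T_1$ and $T_2$ share their circumcircle, and that they share their Brocard circle. The whole argument is a one-step consequence of the definition of the Schoute pencil that was recalled immediately before the statement.

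First I would note that $T_1$ and $T_2$ are both inscribed in the incircle $\C$ of the reference triangle $T$, so by construction they have a common circumcircle (namely $\C$ itself), with common circumcenter $X_3' = X_1$. Next, the corollary right after \cref{prop:x6} already gives that $T_1$ and $T_2$ share their Brocard circle — this is the corollary asserting that they share the Brocard axis and the Brocard circle, itself a direct consequence of sharing both $X_3$ and $X_6$. So for both triangles the two circles that generate the Schoute pencil (circumcircle and Brocard circle) are literally the same pair of circles.

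Now I would invoke the characterisation stated just above the corollary: the isodynamic points $X_{15}$ and $X_{16}$ of a triangle are the two limiting points of its Schoute pencil, and the Lemoine axis is the radical axis of the same pencil. Since the pencil is determined by any two of its circles, $T_1$ and $T_2$ have identical Schoute pencils. Hence their limiting points coincide, giving $X_{15}(T_1) = X_{15}(T_2)$ and $X_{16}(T_1) = X_{16}(T_2)$, and their radical axes coincide, giving a common Lemoine axis.

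There is essentially no obstacle here: the whole content of the corollary has been front-loaded into the preceding propositions and the definition of the Schoute pencil. The only thing to be careful about is to state explicitly that ``two circles determine a coaxial pencil,'' so that identifying the circumcircle and Brocard circle of $T_1$ and $T_2$ really does identify their entire Schoute pencils (and in particular both the limiting points and the radical axis), rather than merely identifying two elements of each pencil.
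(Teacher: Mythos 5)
Your proof is correct and matches the paper's intended argument exactly: the paper presents this as an immediate corollary of the recalled fact that the Schoute pencil is generated by the circumcircle and Brocard circle (with the isodynamic points as its limiting points and the Lemoine axis as its radical axis), both of which were already shown to be shared by $T_1$ and $T_2$. Your explicit remark that two circles determine the entire coaxial pencil is a nice touch of rigor that the paper leaves implicit.
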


The intersection of the Lemoine axis with the Brocard axis is $X_{187}$  \cite{etc}.

\begin{corollary}
$T_1$ and $T_2$ share their $X_{187}$.
\end{corollary}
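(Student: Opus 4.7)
The plan is essentially to chain together two facts already established in the excerpt. The statement $X_{187}$ is defined, in the sentence immediately preceding the corollary, as the intersection point of the Lemoine axis with the Brocard axis of a triangle. Since $X_{187}$ is determined by these two lines alone, to show that $T_1$ and $T_2$ share their $X_{187}$ it suffices to show that both the Lemoine axis and the Brocard axis coincide for the two solution triangles.

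First I would invoke the earlier corollary asserting that $T_1$ and $T_2$ share their Brocard circle; because the Brocard axis is the line through the diameter $X_3 X_6$ of the Brocard circle, and because (by definition of CCP-inscribed solutions together with \cref{prop:x6}) the two triangles share both $X_3$ and $X_6$, they share the entire Brocard axis. Next I would cite the corollary asserting that $T_1$ and $T_2$ share their Lemoine axis, which itself follows from the Schoute pencil description: the Schoute pencil is determined by the circumcircle and Brocard circle, both of which are common to $T_1$ and $T_2$, so its radical axis (the Lemoine axis) is common as well.

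With both lines shared, their unique intersection is the same point for $T_1$ and $T_2$, and by the definition recalled just above the corollary this common intersection is precisely $X_{187}$. Thus $T_1$ and $T_2$ share their $X_{187}$. There is no real obstacle here: the corollary is a direct logical consequence of the two preceding corollaries and the defining property of $X_{187}$, so the only thing worth being careful about is making the dependency chain explicit rather than hiding it behind the ``share their Brocard/Lemoine objects'' phrasing.
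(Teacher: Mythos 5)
Your proposal is correct and is exactly the argument the paper intends: the corollary is stated without proof precisely because it follows immediately from the two preceding corollaries (shared Brocard axis, shared Lemoine axis) together with the definition of $X_{187}$ as their intersection, which is the chain you spell out. Making that dependency explicit is a faithful reconstruction, not a different route.
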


\begin{figure}
    \centering
    \includegraphics[width=\textwidth]{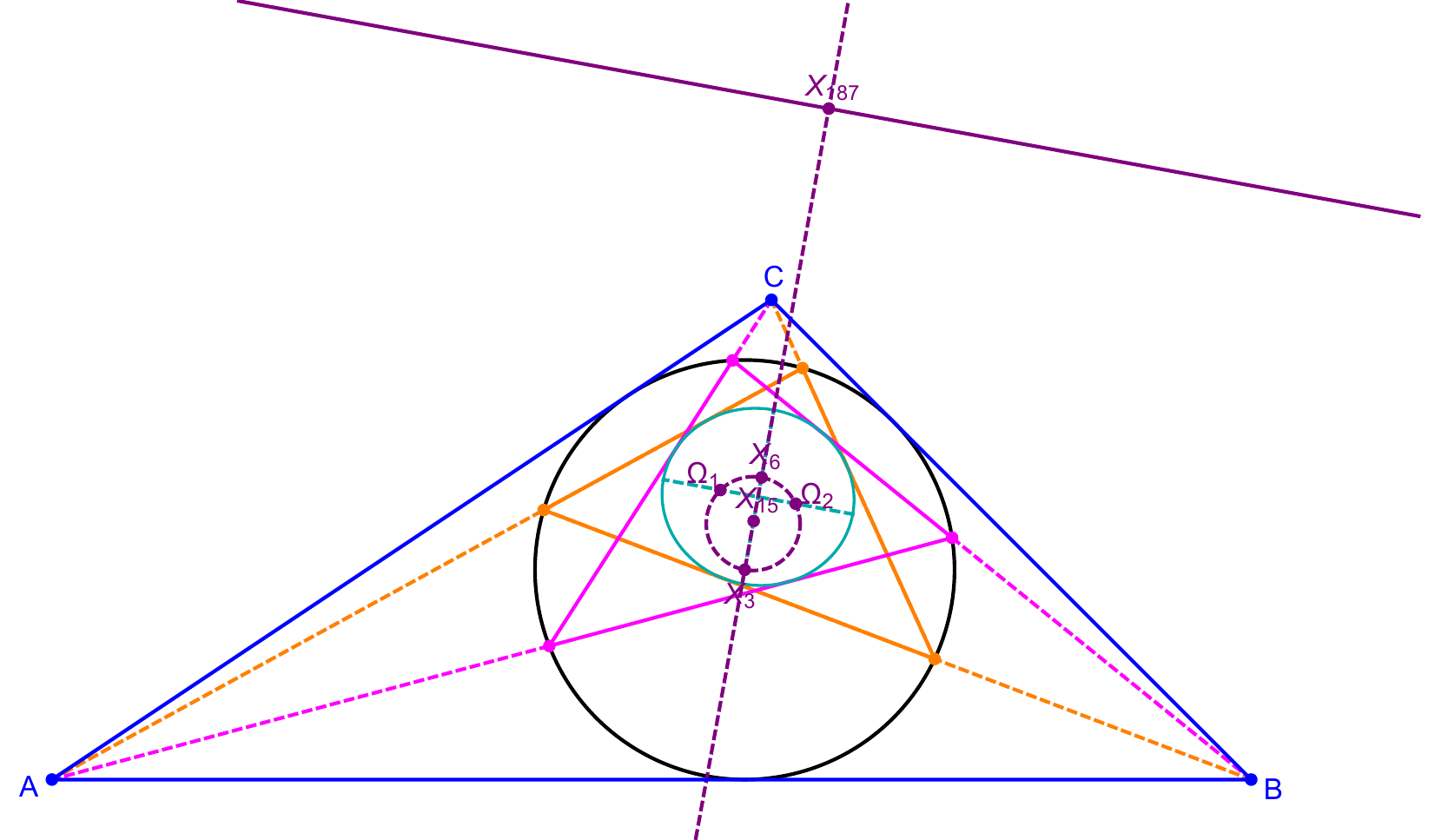}
    \caption{The two solutions (orange, magenta) of CCP on a triangle's incircle (black) share all Brocard geometry objects, to be sure: their Brocard points $\Omega_1,\Omega_2$, Brocard circle and axis (dashed purple), Brocard inellipse (light blue) whose foci are the Brocard points, the two isodynamic points $X_{15}$, and $X_{16}$ (not shown) and the Lemoine axis (solid purple).}
    \label{fig:cc-broc}
\end{figure}

\subsection*{Generalizing the CCP to any inconic} Referring to \cref{fig:cc-inconic}(top), let $\I$ be some inconic of a triangle $T=ABC$.

\begin{proposition}
The CCP of $A,B,C$ on $\I$ has two solutions which circumscribe a single inconic $\B$.
\end{proposition}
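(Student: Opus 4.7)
The plan is to leverage the projective invariance of CCP to reduce the general case to the already-analyzed incircle case. First I would choose a projective transformation $\varphi$ that carries the pair $(T,\I)$ to a pair $(T',\C')$ where $\C'$ is the incircle of some triangle $T'$. Such a $\varphi$ exists because the projective group acts transitively on (triangle, inconic) pairs: fixing a reference triangle, an inconic is uniquely determined by its Brianchon perspector (2 degrees of freedom), and a planar projective map has 8 degrees of freedom, more than enough to match a target triangle (6 DOF) together with the location of its perspector.

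Next I would transport the CCP itself. Straight lines, conics, incidence, and tangency are all projectively invariant, and the Cramer–Castillon problem is phrased entirely in those terms. Hence $\varphi$ sends a polygon solving CCP for $(T,\I)$ to one solving CCP for $(T',\C')$ and is a bijection between the two solution sets. In particular, by \cref{prop:two-sols} applied to $(T',\C')$, the original problem on $(T,\I)$ also has exactly two solutions $T_1, T_2$, with $\varphi(T_i)=T_i'$ the two incircle-case solution triangles.

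Finally I would invoke \cref{cor:broc}: in the incircle case $T_1'$ and $T_2'$ share their Brocard inellipse $\B'$, which by definition is an inscribed conic of each and therefore tangent to all six sides of $T_1'\cup T_2'$. Pulling back, $\B:=\varphi^{-1}(\B')$ is again a conic (projective maps send conics to conics), and since tangency is projectively invariant it is tangent to the six corresponding sides of $T_1\cup T_2$. Thus $\B$ is a common inconic of both solutions, as required.

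The only non-routine step is the opening one — establishing that arbitrary (triangle, inconic) pairs form a single projective orbit — and that is a standard dimension count plus the observation that the Brianchon correspondence between interior points and inconics of a fixed triangle is a bijection. Once $\varphi$ is in hand, the rest of the argument is a direct transfer through $\varphi$, since nothing in the incircle conclusion relied on a Euclidean property of the output beyond the purely projective statement that a common tangent conic to the six sides exists.
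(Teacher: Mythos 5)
Your proposal is correct and takes essentially the same route as the paper: use projective invariance of the CCP to carry $(T,\I)$ to a triangle-with-incircle configuration, invoke \cref{prop:two-sols} and \cref{cor:broc} there, and pull the shared Brocard inellipse back to obtain $\B$. In fact your opening step is slightly more careful than the paper's, which only sends $\I$ to a circle and therefore must allow $\C'$ to be an excircle (a case its citation of the incircle results does not literally cover), whereas your perspector-based transitivity argument normalizes all the way to the incircle.
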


\begin{proof}
The CCP is projectively-invariant since only incidences are involved. Let $T'$ be the image of $T$ under a projectivity $\Pi$ that sends $\I$ to a circle $\C'$, see \cref{fig:cc-inconic}(bottom). Clearly, $\C'$ is the incircle or an excircle of $T'$. Per \cref{prop:two-sols,cor:broc}, the CCP on $(T',\C')$ has two solutions with a common Brocard inellipse $\B'$. Thus $\B$ is the latter's pre-image under $\Pi$, with all tangencies preserved.
\end{proof}

\begin{figure}
\centering
\includegraphics[width=\textwidth]{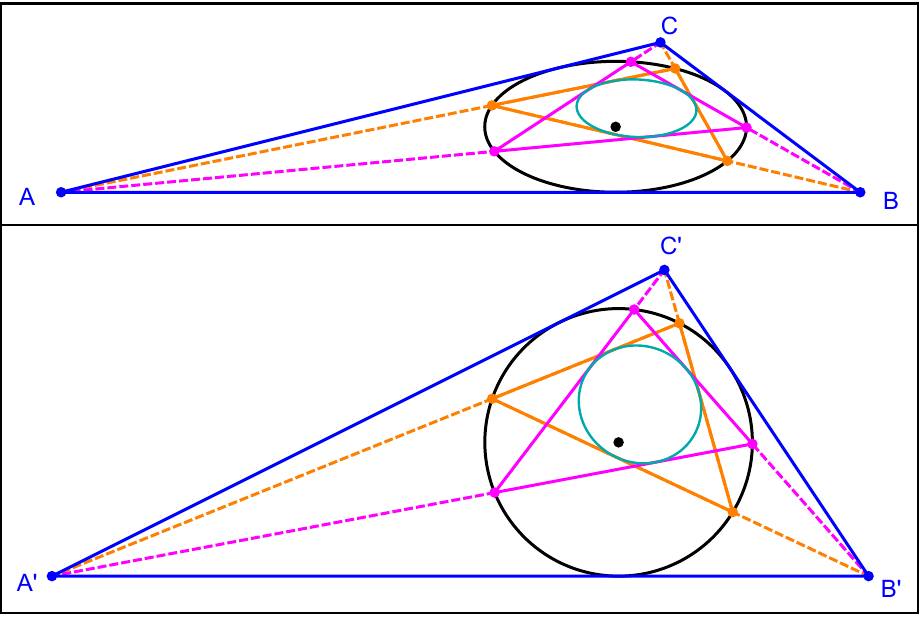}
\caption{Since the CCP is projectively-invariant, its solution on a $ABC$ (top) with respect to a generic inconic(black) can be regarded as the pre-image of a perspectivity $\Pi$ which sends said inconic to a circle (black, bottom). Clearly, this circle is an incircle of the new triangle $A'B'C'$. This implies that the two solutions in the original case will envelop a conic (light blue, top) which is the pre-image of the Brocard inellipse (light blue, bottom) under $\Pi$.}
\label{fig:cc-inconic}
\end{figure}

\begin{proposition}
Consider a general inconic $\I$ with perspector $[p,q,r]$. The two solutions $T_1$ and $T_2$ of the CCP on $\I$ have $A$-vertices given by:
\begin{align*}
T_1 \text{(A-vertex)} = & \left[(2 - \phi) p, q, (1 + \phi) r\right]\\
T_2 \text{(A-vertex)} = & \left[(2 - \phi) p, (1 + \phi) r, q\right]
\end{align*}
\end{proposition}

\begin{corollary}
If $\I$ is the Steiner inellipse (perspector $X_2$), the two A-vertices are given by:
\begin{align*}
T_1 \text{(A-vertex)} = & \left[2 - \phi, 1, 1 + \phi\right]\\
T_2 \text{(A-vertex)} = & \left[2 - \phi, 1 + \phi, 1\right]
\end{align*}

\end{corollary}

Notice that if $\I$ is the incircle, whose perspector is the Gergonne point $X_7:\left[1/(s-a) : 1/(s-b) : 1/(s-c)\right]$, we recover \cref{prop:two-sols}.

\section{Cramer-Castillon on the Excircles}
\label{sec:excircles}
In this section we extend the CCP to the excircles of a given triangle $T$.  Referring to \cref{fig:cc-excs}:

\begin{figure}
    \centering
    \includegraphics[width=\textwidth]{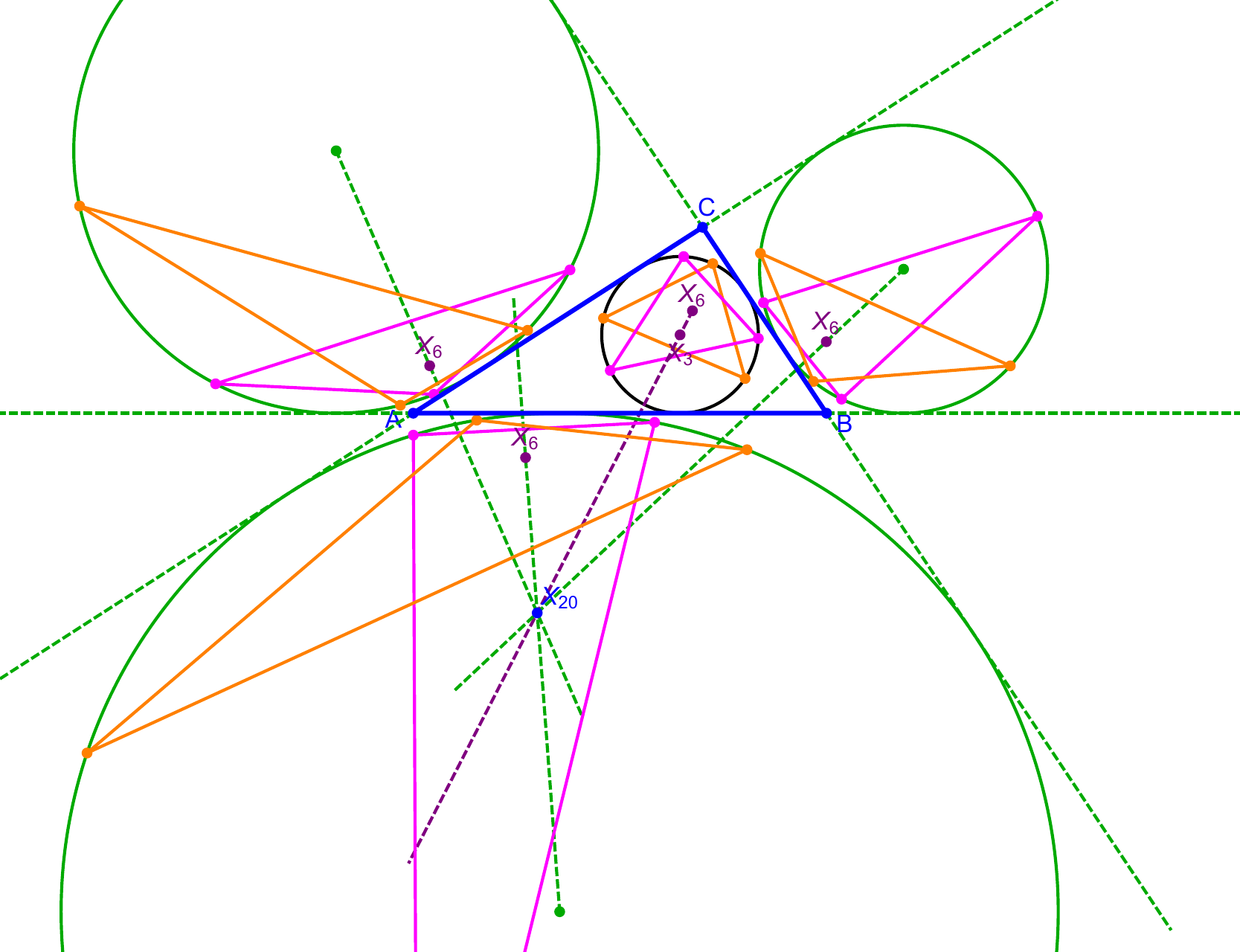}
    \caption{The CCP applied both to the incircle and the excircles (green). In each of the four circles the symmedian (and all Brocard objects) are shared. The four Brocard axes concur on the de Longchamps point $X_{20}$ of the reference triangle.}
    \label{fig:cc-excs}
\end{figure}

\begin{proposition}
The first and second solutions $T_{1,exc}$ and $T_{2,exc}$ of the CCP on the A-excircle are given by the following barycentric vertex matrices:

\[
T_{1,exc} =
\begingroup 
\setlength\arraycolsep{10pt}
\begin{bmatrix}
  -vw (\phi-1)^2 & s v &  s w (\phi-2)^2  \\
 -vw (\phi-2)^2 & s v (\phi-1)^2 & s w \\
 - v w & s v (\phi-2)^2 & s w (\phi-1)^2 \\
\end{bmatrix}\endgroup
\]

\[
T_{2,exc} =
\begingroup 
\setlength\arraycolsep{10pt}
\begin{bmatrix}
- v w (\phi-1)^2 & s v (\phi-2)^2 &  s w \\
 - v w & s v (\phi-1)^2 & s w (\phi-2)^2 \\
- v w(\phi-2)^2  & s v & s w (\phi-1)^2  \\
\end{bmatrix}\endgroup
\]
where $u,v,w$ are as in \cref{prop:two-sols}.
\end{proposition}

\begin{proposition}
The symmedian $X_6$ of the A-excircle is shared by the two solution and is the A-exversion of the reference's Gergonne $X_7$, i.e., its barycentrics are given by $\left[-v w, s v , s w \right]$, coinciding with the Gergonne point $X_7$ of the ``outer'' contact triangle inscribed in the A-excircle.
\end{proposition}

\begin{corollary}
Each pair of solutions of the CCP on an excircle shares all of their Brocard objects.
\end{corollary}

\begin{proposition}
The Brocard axis of the incircle-CCP as well as the 3 Brocard axes of the excircle-CCP solutions concur on the de Longchamps point $X_{20}$ of the reference. 
\end{proposition}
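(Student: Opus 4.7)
The plan is to reduce the concurrence to four collinearity checks, each asserting that $X_{20}$ lies on the line through two explicit triangle centers of the reference $T$. By the preceding results, each Brocard axis in question is the line through the common circumcenter of the corresponding CCP solution pair (the centre of the relevant circle) and their common symmedian, both of which are known in closed form as triangle centers of $T$.

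I would first dispose of the incircle case. By \cref{prop:x6}, the incircle Brocard axis is the line $X_1 X_7$, which is the classical \emph{Soddy line} of $T$; it is well known \cite{etc} that $X_{20}$ also lies on this line, so this case requires no further computation.

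Next, for the $A$-excircle the Brocard axis is the line through $I_A = [-a:b:c]$ and $P_A = [(b-s)(c-s):(b-s)s:(c-s)s]$ (the $A$-exversion of the Gergonne, per the preceding proposition). Collinearity with $X_{20}$ reduces to the vanishing of the $3\times 3$ barycentric determinant formed by $I_A$, $P_A$, and $X_{20}$, which is a polynomial identity in $a,b,c$ that I would verify with a CAS. Because the barycentric expression for $X_{20}$ is cyclically symmetric in $(a,b,c)$, and the $B$- and $C$-excircle Brocard axes are the cyclic images of the $A$-excircle axis under $(a,b,c) \mapsto (b,c,a)$ and $(a,b,c) \mapsto (c,a,b)$, the remaining two cases follow immediately from the same identity under relabeling.

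The principal obstacle is purely the size of the symbolic expansion: the barycentrics of $X_{20}$ are quartic in the sidelengths, so the determinant expands into a lengthy symmetric polynomial that must collapse to zero. The computation is nevertheless routine for a CAS, and no conceptual difficulty remains once the three collinear points on each line have been identified.
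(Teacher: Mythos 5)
Your proposal is correct and follows essentially the same route as the paper: the incircle axis is identified with the Soddy line $X_1X_7$, known to contain $X_{20}$, and each excircle axis is reduced to a line through two explicitly known reference-triangle centers, with the final concurrence left to a symbolic verification. The only difference is organizational: the paper obtains the excircle axes as $a \to -a$ exversions of the Soddy line and then merely asserts ``it can be shown'' that the four lines meet at $X_{20}$, whereas you spell out that verification concretely as the vanishing of the $3\times 3$ barycentric determinant on $[-a:b:c]$, the exverted Gergonne point, and $X_{20}$ for the $A$-excircle, dispatching the $B$- and $C$-cases by cyclic relabeling --- a slightly more explicit account of the computation the paper leaves implicit.
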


\begin{proof}
The shared Brocard axis of the incircle-CCP solutions contains, by definition, $X_3$ and $X_6$ of either solution triangle. We saw above these correspond to $X_1$ and $X_7$ of the reference, i.e., it is the Soddy line of the reference \cite{etc-central-lines}, which is known to pass through $X_{20}$. The Brocard axis shared by the A-excircle solutions are the A-exversion ($a \to -a$) of the incircle Brocard axis, and similarly for the B- and C-excircles. It can be shown these 4 lines meet at $X_{20}$.
\end{proof}

\subsection*{Twenty-three from one}

There are a total of 4 pairs of triangles which are solutions to the CCP on both incircle and excircle, i.e., there are eight triangles and a total of 24 vertices.

\begin{proposition}
Twenty-three of said vertices can be directly derived from a single vertex of a solution triangle of the CCP in the incircle. 
\end{proposition}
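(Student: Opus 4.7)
The plan is to exhibit three elementary algebraic substitutions acting on the barycentric formulas established earlier in the paper, and to show that starting from a single vertex---say $M_1$ of $T_1$ on the incircle---their repeated composition reaches each of the remaining 23 vertices. The three substitutions are: (i) the cyclic shift $\sigma:(a,b,c)\mapsto(b,c,a)$, which also cycles $(u,v,w)\mapsto(v,w,u)$ and reorders the ordered barycentric triple; (ii) the Galois conjugation $\tau:\phi\mapsto 1-\phi=-1/\phi$, equivalently $\sqrt{5}\mapsto-\sqrt{5}$; and (iii) the $A$-exversion $\epsilon_A:a\mapsto -a$, under which $s\mapsto s-a$, $u\mapsto s$, $v\mapsto -w$, and $w\mapsto -v$. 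The $B$- and $C$-exversions need not be listed separately, as they are the conjugates $\sigma^{-1}\epsilon_A\sigma$ and $\sigma^{-2}\epsilon_A\sigma^{2}$.

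First I would handle the incircle orbit. Using the Fibonacci-style identities $(\phi-1)^{2}=\phi^{-2}$, $(\phi-2)^{2}=\phi^{-4}$, $(2\phi-3)^{2}=\phi^{-6}$, and the dual identities $(\phi+1)^{2}=\phi^{2}$, $(2\phi+1)^{2}=\phi^{6}$, $(3\phi+2)^{2}=\phi^{8}$, each vertex in \cref{prop:two-sols} can be renormalized to the form $[1:\phi^{\alpha}:\phi^{\beta}]$ at $(1/u,1/v,1/w)$. Direct inspection of the resulting exponent pairs then shows that $\sigma$ cycles the three vertices of $T_1$ among themselves (and likewise for $T_2$), while $\tau$ pairs the $i$-th vertex of $T_1$ with the $i$-th vertex of $T_2$. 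Hence $\langle\sigma,\tau\rangle$ acts transitively on the six incircle CCP vertices, generating them all from $M_1$ of $T_1$.

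Next I would apply $\epsilon_A$ to $M_1$ of $T_1$ directly, obtaining a barycentric triple which, after a global rescaling, matches one of the vertices of $T_{2,\mathrm{exc}}$ read off the formulas for the $A$-excircle CCP. Iterating over the six incircle vertices establishes $\epsilon_A$ as a bijection between the six incircle CCP vertices and the six $A$-excircle CCP vertices (it turns out to swap $T_1\leftrightarrow T_2$ and cyclically relabel, but these rearrangements are absorbed by further applications of $\sigma$ and $\tau$ already at hand). Since the $\sigma$-conjugates of $\epsilon_A$ are the $B$- and $C$-exversions, the action of $\langle\sigma,\tau,\epsilon_A\rangle$ is transitive across all four circles, and the orbit of $M_1$ of $T_1$ therefore covers the full set of 24 vertices.

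The main obstacle is the verification in the preceding paragraph: the exversion does \emph{not} send $M_i$ of $T_j$ to $M_i$ of $T_{j,\mathrm{exc}}$ under the naive indexing, but rather performs a simultaneous $T_1\leftrightarrow T_2$ swap together with a cyclic re-indexing of the three vertices, so tracking signs and $\phi$-powers through the substitution is the easiest place to slip. Once this explicit cross-table between the six incircle and six $A$-excircle vertices is nailed down, transitivity of $\langle\sigma,\tau,\epsilon_A\rangle$ on the 24 vertices is immediate and the claim follows.
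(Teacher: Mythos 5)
Your proposal is correct, and it shares the overall architecture of the paper's proof---realize the 24 vertices as the orbit of a single vertex under a small set of algebraic substitutions (a cyclic shift, a swap of the two solutions, and exversion)---but the mechanism you use for the $T_1\leftrightarrow T_2$ swap is genuinely different. The paper passes from $T_1$ to $T_2$ by a \emph{bicentric} substitution ($b\leftrightarrow c$ together with exchanging the second and third barycentric coordinates), whereas you pass between them by the Galois conjugation $\tau:\phi\mapsto 1-\phi$ (i.e.\ $\sqrt{5}\mapsto-\sqrt{5}$). Your claim does check out: writing the rows of the matrices in \cref{prop:two-sols} as powers of $\phi$ via $\phi^2=\phi+1$ and conjugating, the $i$-th row of $T_1$ becomes projectively equal to the $i$-th row of $T_2$, and the cyclic shift $\sigma$ (substitution plus rotation of coordinate slots) indeed permutes the rows of $T_1$ cyclically, so $\langle\sigma,\tau\rangle\cong\mathbb{Z}_3\times\mathbb{Z}_2$ acts transitively on the six incircle vertices. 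What your route buys: it exhibits the two CCP solutions as algebraic conjugates over $\mathbb{Q}(a,b,c)$---exactly what one expects of the two roots of the CCP quadratic---and, since $\tau$ commutes with $\sigma$, the bookkeeping is cleaner; your identification of the $B$- and $C$-exversions as $\sigma$-conjugates of $\epsilon_A$ is also more systematic than the paper's closing ``the remaining vertices can be obtained similarly.'' What the paper's bicentric swap buys is that it stays inside the standard substitutions of triangle geometry (natural when tabulating bicentric pairs such as $\Omega_1,\Omega_2$) and requires no identities in $\phi$. Two small repairs to your write-up: the identity $(\phi+1)^2=\phi^2$ should read $(\phi+1)^2=\phi^4$ (the neighboring identities $(2\phi+1)^2=\phi^6$ and $(3\phi+2)^2=\phi^8$ are correct, and nothing downstream depends on the misprint); and the re-indexing you anticipate under $\epsilon_A$ is real---for instance, exverting the first row of $T_1$ lands, after rescaling, on the \emph{second} row of $T_{2,\mathrm{exc}}$ as printed in the excircle proposition---so the cross-table you flag as the crux is exactly the finite verification needed to close the argument, at the same level of rigor as the paper's own proof.
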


\begin{proof}
As seen in \cref{prop:two-sols}, the A-vertex of the first solution $T_1$ on the incircle is given by:
\[\left[(1-\phi)^2 (s-b)(s-c), (s-a) (s-c),(2-\phi)^2 (s-a)(s-b)\right]\]

Perform a bicentric substitution, i.e., $b \to c$ and $c\to b$, and swap postions 2 and 3 to arrive at:
\[\left[(\phi-1)^2 (s-b)(s-c),(\phi-2)^2 (s-a)(s-c),(s-a)(s-b)\right]\]
i.e., the A-vertex of $T_2$. The other vertices can be computed cyclically. Now, derive the A-excircle solution from the incircle one by performing an ``A-exversion'', i.e., changing every $a \to -a$, obtaining the $T_1$ A-vertex of the A-excircle:
\[\left[ (\phi-1)^2 (b-s)(s-c),(s-b) s, (\phi-2)^2 (s-c) s\right]\]

For the B-vertex of the A-excircle, we first perform a cyclic substitution, then the exversion $a \to -a$, obtaining:
\[\left[(\phi-1)^2 (s-b) (s-c) , (s-a)(s-c), (\phi-2)^2 (s-a)(s-b) \right]\]

which leads to:
\[\left[(\phi-2)^2 (s-b)(s-c), (\phi-1)^2 (s-a)(s-c),(s-a)(s-b)\right]\]

and then: 
\[\left[(\phi-2)^2 (s-b)(s-c) , (\phi-1)^2 s(b-s),s(c-s)\right]\]

The remaining vertices can be obtained similarly.
\end{proof}

\section*{Acknowledgements}
\noindent We would like to thank Arseniy Akopyan and Ronaldo Garcia for useful discussions. 

\appendix

\section{Center Correspondences}
\label{app:cc-correspond}
Let $[i,k]$ indicate that $X_i$ of either solution of the incircle CCP coincides with $X_j$ of the reference triangle. The following is a list of corresponding pairs: {\small $[3,1]$, $[6,7]$, $[15,3638]$,  $[16,3639]$,  $[32,10481]$,  $[182,5542]$,  $[187,1323]$,  $[371,482]$,  $[372,481]$,  $[511,516]$,  $[512,514]$,  $[575,43180]$,  $[576,30424]$,  $[1151,176]$,  $[1152,175]$,  $[1350,390]$,  $[1351,4312]$,  $[1384,21314]$,  $[2076,14189]$,  $[3053,279]$,  $[3098,30331]$,  $[3311,1373]$,  $[3312,1374]$,  $[3592,21169]$,  $[5017,42309]$,  $[5023,3160]$,  $[5085,11038]$,  $[5585,31721]$,  $[5611,10652]$,  $[5615,10651]$,  $[6200,31538]$,  $[6221,1371]$,  $[6396,31539]$,  $[6398,1372]$,  $[6409,17805]$,  $[6410,17802]$,  $[6419,21171]$,  $[6425,31601]$,  $[6426,31602]$,  $[6431,21170]$,  $[6437,17804]$,  $[6438,17801]$,  $[6449,17806]$,  $[6450,17803]$,  $[11824,31567]$,  $[11825,31568]$,  $[12305,30333]$,  $[12306,30334]$,  $[14810,43179]$,  $[15815,5543]$,  $[21309,20121]$,  $[31884,8236]$,  $[43118,30342]$,  $[43119,30341]$,  $[43120,31570]$,  $[43121,31569]$}.

\bibliographystyle{maa}
\bibliography{999_refs,999_refs_rgk}

\begin{thebibliography}{10}
\expandafter\ifx\csname urlstyle\endcsname\relax
 \providecommand{\url}[1]{doi:\discretionary{}{}{}#1}\else
 \providecommand{\url}{doi:\discretionary{}{}{}\begingroup
  \urlstyle{rm}\Url}\fi

\bibitem{arnold2018-cross-ratio}
Arnold, M., Fuchs, D., Izmestiev, I., Tabachnikov, S. (2020).
\newblock Cross-ratio dynamics on ideal polygons.
\newblock \emph{Intl. Math. Research Notices}.

\bibitem{casey1888}
Casey, J. (1888).
\newblock \emph{A sequel to the first six books of the Elements of Euclid}.
\newblock Dublin: Hodges, {F}iggis \& {C}o.
\newblock Fifth edition.

\bibitem{johnson17-schoutte}
Johnson, R. (1917).
\newblock Directed angles and inversion with a proof of {S}choute's theorem.
\newblock \emph{Am. Math. Monthly}, 24: 313--317.

\bibitem{johnson1960}
Johnson, R.~A. (1960).
\newblock \emph{Advanced Euclidean Geometry}.
\newblock New York, NY: Dover, 2nd ed.
\newblock Editor John W. Young.

\bibitem{etc}
Kimberling, C. (2019).
\newblock Encyclopedia of triangle centers.
\newblock \url{faculty.evansville.edu/ck6/encyclopedia/ETC.html}.

\bibitem{etc-central-lines}
Kimberling, C. (2020).
\newblock Central lines of triangle centers.
\newblock \url{bit.ly/34vVoJ8}.

\bibitem{ostermann2012}
Ostermann, A., Wanner, G. (2012).
\newblock \emph{Geometry by Its History}.
\newblock Basel: Springer Verlag.

\bibitem{reznik2022-matryoshka}
Reznik, D., Garcia, R. (2022).
\newblock A matryoshka of {B}rocard porisms.
\newblock \emph{European J. of Math}, 8: 308--329.

\bibitem{shail1996-brocard}
Shail, R. (1996).
\newblock Some properties of brocard points.
\newblock \emph{The Mathematical Gazette}, 80(489): 485–491.

\bibitem{wanner2006}
Wanner, G. (2006).
\newblock The {C}ramer–{C}astillon problem and {U}rquhart's ``most
  elementary'' theorem.
\newblock \emph{Elemente der {M}athematik}, 61(2).

\bibitem{mw}
Weisstein, E. (2019).
\newblock Mathworld.
\newblock \emph{MathWorld--A Wolfram Web Resource}.
\newblock \url{mathworld.wolfram.com}.

\end{thebibliography}

\end{document}